\numberwithin{equation}{section}
\numberwithin{table}{section}
\numberwithin{algorithm}{section}
\def\R{{\mathbb{R}}}
\def\Sc{\mathbb{S}}
\def\Sn{\Sc^n}
\def\Snp{\Sc_+^n}
\def\Rn{{\mathbb{R}^n}}
\newtheorem{theorem}{Theorem}[section]
\newtheorem{prop}[theorem]{Proposition}
\newtheorem{remark}[theorem]{Remark}
\newtheorem{problem}[theorem]{Problem}
\newtheorem{lemma}[theorem]{Lemma}
\crefname{thm}{Theorem}{Theorems}
\Crefname{thm}{Theorem}{Theorems}
\crefname{assump}{Assumption}{Theorems}
\Crefname{assump}{Assumption}{Theorems}
\crefname{problem}{Problem}{Theorems}
\Crefname{problem}{Problem}{Theorems}
\crefname{conjecture}{Conjecture}{Theorems}
\Crefname{conjecture}{Conjecture}{Theorems}
\crefname{proposition}{Proposition}{Propositions}
\Crefname{proposition}{Proposition}{Propositions}
\crefname{prop}{Proposition}{Propositions}
\Crefname{prop}{Proposition}{Propositions}
\crefname{cor}{Corollary}{Corollaries}
\Crefname{cor}{Corollary}{Corollaries}
\crefname{lem}{Lemma}{Lemmas}
\Crefname{lem}{Lemma}{Lemmas}
\theoremstyle{definition}
\crefname{definition}{definition}{definitions}
\Crefname{definition}{Definition}{Definitions}
\crefname{defn}{definition}{definitions}
\Crefname{defn}{Definition}{Definitions}
\crefname{remark}{Remark}{Remarks}
\Crefname{remark}{Remark}{Remarks}
\crefname{rmk}{Remark}{Remarks}
\Crefname{rmk}{Remark}{Remarks}
\crefname{example}{Example}{Examples}
\Crefname{example}{Example}{Examples}
\crefname{align}{}{}
\Crefname{align}{}{}
\crefname{equation}{}{}
\Crefname{equation}{}{}
\newcommand{\textdef}[1]{\textit{#1}\index{#1}}
\newcommand{\A}{{\mathcal A}}
\newcommand{\bbm}{\begin{bmatrix}}
\newcommand{\ebm}{\end{bmatrix}}
\newcommand{\bem}{\begin{pmatrix}}
\newcommand{\eem}{\end{pmatrix}}
\newcommand{\beq}{\begin{equation}}
\newcommand{\beqs}{\begin{equation*}}
\newcommand{\bet}{\begin{table}}
\newcommand{\eeq}{\end{equation}}
\newcommand{\eeqs}{\end{equation*}}
\newcommand{\beqr}{\begin{eqnarray}}
\DeclareMathOperator{\trace}{{trace}}
\DeclareMathOperator{\diag}{{diag}}
\DeclareMathOperator{\Diag}{{Diag}}
\DeclareMathOperator{\rank}{{rank}}
\DeclareMathOperator{\spanl}{{span}}
\newcommand{\nc}{\newcommand}
\nc{\arrow}{{\rm arrow\,}}
\nc{\Arrow}{{\rm Arrow\,}}
\nc{\BoDiag}{{\rm B^0Diag\,}}
\nc{\bodiag}{{\rm b^0diag\,}}
\nc{\Mm}{{\mathcal M}^{m} }
\nc{\Mmn}{{\mathcal M}^{mn} }
\nc{\Mnr}{{\mathcal M}_{nr} }
\nc{\Mnmr}{{\mathcal M}_{(n-1)r} }
\nc{\kwqqp}{Q{$^2$}P\,}
\nc{\kwqqps}{Q{$^2$}Ps}
\nc{\notinaho}{(X,S)\in \overline{AHO}(\A)}
\nc{\inaho}{(X,S)\in AHO(\A)}
\newcommand{\bea}{\begin{eqnarray}}%
\newcommand{\eea}{\end{eqnarray}}%
\newcommand{\beas}{\begin{eqnarray*}}%
\newcommand{\eeas}{\end{eqnarray*}}%
\newcommand{\Rnn}{\R^{n \times n}}%
\newcommand{\Hnp}[1][]{\,\mathbb{H}_+^{\ifthenelse{\equal{#1}{}}{n}{#1}}}
\newcommand{\Hn}[1][]{\,\mathbb{H}^{\ifthenelse{\equal{#1}{}}{n}{#1}}}
\newcommand{\Dn}[1][]{\,\mathbb{D}^{\ifthenelse{\equal{#1}{}}{n}{#1}}}
\DeclareMathOperator{\st}{s.t.}
\begin{document}

\title{Finding Maximum Determinant Principal Submatrices via
Hadamard Bounds and Projection Methods}

\author{
Hao Hu\thanks{Corresponding author. School of Mathematical and Statistical Sciences, Clemson University, Clemson, SC 29634, USA. Email: \texttt{hhu2@clemson.edu}. Research supported by the Air Force Office of Scientific Research under award FA9550-23-1-0508.}
\and
Stefan Sremac\thanks{Department of Mathematics, Walla Walla University, 204 S College Ave, College Place, WA 99324, USA.}
\and
Hugo J. Woerdeman\thanks{Department of Mathematics, Drexel University, 3141 Chestnut Street, Philadelphia, PA 19104, USA. Research supported by the National Science Foundation under grant DMS 2348720.}
\and
Henry Wolkowicz\thanks{Department of Combinatorics and Optimization, Faculty of Mathematics, University of Waterloo, Waterloo, Ontario N2L 3G1, Canada. Research supported by the Natural Sciences and Engineering Research Council of Canada.}
}

\date{}

\maketitle

\begin{abstract}
An important yet challenging problem in numerical linear algebra is
finding a principal submatrix with maximum determinant from a given
symmetric positive semidefinite matrix. This problem arises in
experimental design, statistics, and machine learning.

We study several exact and approximate approaches to this
problem. We first derive an upper bound based on Hadamard's inequality,
along with a projection scheme based on the Gram--Schmidt process
without normalization. This combination yields a highly effective upper
bound and leads to an exact branch-and-bound algorithm for 
moderate-sized instances.

For larger scale problems
we propose a continuous relaxation that facilitates
reliable performance evaluation when the exact method returns only
near-optimal solutions. We further prove that the projection scheme
strengthens the upper bound derived from this relaxation. 
Numerical experiments demonstrate the effectiveness of the proposed methods across a broad range of datasets.
\end{abstract}

\noindent\textbf{Keywords:} maximum determinant; maximizing $\log \det$; Hadamard's inequality; continuous relaxation; branch-and-bound; principal submatrix

\section{Introduction}
The problem of identifying submatrices with maximum determinant arises
naturally in several areas of mathematics including combinatorics,
numerical linear algebra, experimental design, machine learning, and
statistics. Specifically, given a real symmetric positive semidefinite
matrix \( M \in \mathbb{R}^{n \times n} \), a principal submatrix of \(
M \) is obtained by selecting a subset of indices \( K \subseteq \{1,
\dots, n\} \) and extracting the corresponding rows and columns.
The task of selecting a subset \( K \) of fixed cardinality \(r \)
such that the resulting \textdef{principal submatrix, $M_K$}, has
maximum determinant is known as the \emph{maximum determinant principal
submatrix problem}, or \emph{MAXDET}. 
In this paper, we focus on the case where the size $r$ coincides with
the rank of the positive semidefinite matrix $M$. 

In numerical linear algebra,
finding well-conditioned submatrices in large-scale matrices is
of importance, and selecting a submatrix with large determinant modulus is
a useful relaxation. Note that this follows from the fact that
the ratio of the arithmetic and geometric means of the eigenvalues, 
$\omega(M) = \frac {\trace(M)/n}{\det(M)^{1/n}}$, 
is a valid condition number, see e.g.,~\cite{jung2025omega}.
\index{$A_S$, principal submatrix}

This problem is computationally challenging: it is NP-hard in
general~\cite{ko1983computational, di2015largest}, and even
approximating the optimal value is significantly
challenging~\cite{khachiyan1995complexity}. The problem is also
closely related to volume maximization in convex geometry, such
as finding the maximum volume of a simplex or parallelepiped spanned 
by a subset of vectors~\cite{goreinov1997theory, goreinov2001maximal,
di2015largest}. These geometric interpretations link MAXDET to low-rank
approximation~\cite{goreinov2010good} and matrix sketching techniques.

In experimental design, particularly for linear and logistic regression,
selecting a submatrix with maximum determinant corresponds to finding a
D-optimal design, one that minimizes the volume of the confidence
ellipsoid for the parameter estimates in
regression models~\cite{pukelsheim2006optimal}. Applications include
medical data modeling~\cite{ouyang2016design}, constrained design
construction~\cite{ucinski2015algorithm}, and categorical data
analysis~\cite{yang2017d}. Classical and Bayesian formulations of
optimal design problems have been well studied in the
literature~\cite{sebastiani2000maximum}.
In particular, when the data matrix $V \in \R^{n \times r}$ has full
column rank and one selects $s = r$ rows so that
$M = VV^T$ and $\det(M_K) = \det(V_K)^2$, our MAXDET problem
coincides with the \emph{0/1 D-optimality problem}~\cite{welch1982branch,
ko1998comparison}. This is also closely related to the
\emph{maximum-entropy sampling problem (MESP)}, in which one
selects an order-$s$ principal submatrix of a positive
definite covariance matrix $C$ so as to maximize $\log\det C[S,S]$;
see~\cite{ko1995exact, lee1998constrained, anstreicher1999using,
anstreicher2018maximum, anstreicher2020efficient}, the 
monograph~\cite{fampa2022maximum}, and the recent
survey~\cite{fampa2026recent}. The two problems are unified by the
generalized maximum-entropy sampling framework
of~\cite{ponte2026convex}, and recent work
\cite{ponte2025relationship,ponte2024admm} establishes precise
mappings between MESP and 0/1 D-Opt and develops fast
ADMM solvers for the corresponding convex relaxations.
Our setting in this paper, where $M$ is positive
\emph{semi}definite of rank exactly $r$ and the submatrix size equals
$r$, falls in the intersection of these two lines of work, but is
also distinct as it is driven directly by the rank-decomposition factor
$V$ rather than the (potentially much larger) $n \times n$ matrix $M$.

In machine learning and statistics, determinant-based selection criteria have been adopted for data summarization and diversity modeling using determinantal point processes~\cite{kulesza2012determinantal}. Related problems also arise in active learning~\cite{zheng2023batch} and sparse modeling with budget constraints~\cite{ravi2016experimental}.

To cope with the combinatorial explosion of subset selection, various
algorithmic approaches have been proposed. These include
greedy algorithms based on rank-revealing
QR factorizations, convex relaxations using log-determinant
functions~\cite{candes2009exact, della1982minimum}, and matrix
inequality constrained optimization frameworks, such as those in
determinant maximization problems~\cite{vandenberghe1998determinant}.
Recent advances include primal-dual
methods for optimization~\cite{gonzalez2009stable}, submodular function
maximization under matroid or partition
constraints~\cite{nikolov2015proportional, nikolov2016maximizing}, and
proportional volume sampling~\cite{nikolov2015proportional}. More recent
advances have focused on developing efficient upper bounds, 
such as the `linx' bound, which can be used within these frameworks 
to accelerate pruning and improve solution efficiency 
\cite{anstreicher2020efficient,Chenetal2024}.

Our work concentrates on four points:
\textup{(i)}~a Hadamard-type upper bound for the maximum determinant
principal submatrix problem;
\textup{(ii)}~orthogonal projections that materially tighten both that bound and
a continuous determinant relaxation;
\textup{(iii)}~integrating these components in branch-and-bound, which yields a highly
effective exact procedure for small- to moderate-sized instances and, on larger
problems, still pairs favorably with~\textup{(i)} for pruning; and
\textup{(iv)}~extensive numerical experiments, including formulations derived from NP-hard odd cycle packing, validate the approach.

\textbf{Notation:} We use $|\cdot|$ to denote both absolute value and 
cardinality, depending on the context.
We define \textdef{$\diag(M)$}$: \Rnn \to \Rn$ to be the linear transformation on
square matrices of order $n$ that yields the diagonal vector;
the adjoint linear transformation is \textdef{$\Diag(v)=\diag^*(v)$}. We
denote by $e$ the vector of all ones of appropriate dimension. 
Further notation is introduced below as needed.
\index{ones vector, $e$}
\index{$e$, ones vector}

\textbf{Outline:} The paper is organized as follows.
We continue in~\Cref{sect:prel} with the problem definition.
Then in~\Cref{sect:reforproj} we reformulate the problem using projections, which substantially strengthen the upper bounds. 
 In~\Cref{sect:upperbnds} 
we present an efficient upper bound for the MAXDET problem
based on the Hadamard inequality. This upper bound together with the projection technique yields an efficient branch-and-bound algorithm for solving the MAXDET problem. We also study the upper bound based on a continuous relaxation, and its behavior under the projection technique. In \Cref{sect:numerics}, we present the numerical results of our branch-and-bound algorithm and the continuous relaxation upper bounds.

\section{Preliminaries}
\label{sect:prel}

\subsection{Problem definition}
\index{$\Sn$, symmetric matrices order $n$}
\index{symmetric matrices order $n$, $\Sn$}
Let $\Sn$ be the Euclidean space of real $n\times n$ symmetric matrices
equipped with the \textdef{trace inner-product} 
$\langle A,B\rangle = \trace(AB)$,
and let $\Snp$ denote the cone of positive semidefinite matrices.  We fix
the given data:
\begin{equation}
	\label{eq:L}
	M \in \Snp, \text{  with  }  \rank(M) = r<n. 
\end{equation}
\index{$M_K$,  principal submatrix}
\index{ principal submatrix, $M_K$}
For any subset $K \subseteq \{1,\dotso,n\}$,
we let $M_K$  denote the principal submatrix of $M$ consisting of 
elements from rows and columns indexed by $K$. We consider the following 
problem of maximizing the volume (or determinant).
\index{$\Sn$, symmetric matrices} 
\index{symmetric matrices, $\Sn$} 
\index{$\Snp$, positive semidefinite matrix cone} 
\index{positive semidefinite matrix cone, $\Snp$} 

\begin{problem}[{\textdef{MAXDET}}]
\label{prob:main}
Given $M,r,n$ as in \cref{eq:L}, solve:
\begin{equation}
\label{eq:main}
\max \, \{ \det(M_K) \,:\, K \subseteq \{1,\dotso,n\}, \,\lvert K \rvert = r \}.
\end{equation}	
\end{problem}
We note that: in \Cref{prob:main} the matrix $M$ is positive
\emph{semi}definite, without loss of generality $\rank(M)=r < n$, 
and the submatrix size
equals the rank. This is in contrast to the setting
in~\cite{anstreicher2020efficient}, where $M$ is assumed to be positive
definite. 

It is convenient for our purposes to use an alternative formulation of
\Cref{prob:main} using a factorization of $M$.  Let $V \in
\R^{n\times r}$ provide the full rank factorization \textdef{$M = VV^T$}.   
Let \( V_K \) denote the submatrix of \( V \) consisting of the rows
indexed by \( K \).  Then we have $\det(M_{K}) = \det(V_{K})^{2}$ and
thus \Cref{prob:main} is equivalent to:
\begin{equation}
	\label{eq:main2}
\max \, \{ |\det(V_K)| \,:\, K \subseteq \{1,\dotso,n\}, \,\lvert K \rvert = r \}.
\end{equation}
In this paper we assume that the matrix $V$ in the factorization is given.
In many applications, such as experimental design, the data matrix $V$
is available directly and the symmetric matrix $M = VV^T$ is never
formed explicitly.
We also note that the optimal value of \eqref{eq:main} is the square of the optimal value of \eqref{eq:main2}.

To compute an exact optimal solution, a branch-and-bound algorithm can be employed to recursively determine the subset of rows of \( V \) to be included in the final solution.  
We consider a generalized version of the original problems \eqref{eq:main} and \eqref{eq:main2}, where a subset of rows is required to be included in the solution and certain rows may be excluded. Rows that are excluded can be safely removed from~\( V \). Given a subset \( J \subseteq \{1,\ldots,n\} \) of required rows with at most $r$ elements, define the following variants of \eqref{eq:main} and \eqref{eq:main2}.
\begin{enumerate}
	\item Find the principal submatrix of $M=VV^T$ that
includes the rows
and columns indexed by $J$ and that has maximum determinant value:
	\begin{equation}
		\label{eq:maina}
		\begin{array}{rrll}
 \textdef{$\delta(V, J)$} :=  
			&\max & \det(M_{J \cup K}) \\
			&\st & K \subseteq \{1,\ldots,n\}, \\
			&& J \cap K = \emptyset,\\
			&& |J \cup K| = r.
		\end{array}
	\end{equation}
	
	\item Find the 
\( r \times r \) submatrix of \( V \) that contains the rows indexed by \( J \)
and that has maximum absolute determinant value:
	\begin{equation}
		\label{def_rprob}
		\begin{array}{rrll}
	\sqrt{\delta(V, J)} =  
    &\max & |\det(V_{J \cup K})| \\
			&\st & K \subseteq \{1,\ldots,n\}, \\
			&& J \cap K = \emptyset,\\
			&& |J \cup K| = r.
		\end{array}
	\end{equation}
\end{enumerate}
Note that $ |J| $ can range from $ 0 $ to $ r $, and we require $ r - |J| $ additional rows to form a full-rank $ r \times r $ submatrix.

\section{Reformulation via Projection}
\label{sect:reforproj}
In this section, we describe a projection-based reformulation of the matrix \( V \) that is particularly useful within a branch-and-bound algorithm. This approach can significantly strengthen upper bounds in practice.

Without loss of generality, for \cref{eq:maina,def_rprob},
we assume \( J = \{1,\ldots,j\} \) for some \( j \leq r \). 
We define a
new matrix \( \widetilde{V} \in \mathbb{R}^{n \times r} \), whose rows
\( \tilde{v}_1, \ldots, \tilde{v}_n \) are computed via the following
orthogonal projection procedure in \Cref{alg:orthogproj}:

\begin{algorithm}[H]
  \small
	\caption{Orthogonal Projection Process
\\for $V\in \R^{n\times r},J=\{1,\ldots, j\}$}
	\label{alg:orthogproj}
	\begin{algorithmic}[1]
		\State Set \( \tilde{v}_1 \gets v_1 \).
		\For{$i = 2$ to $j$}
		\State Set \( \tilde{v}_i \) to the projection of \( v_i
\) onto \( \spanl \{\tilde{v}_1, \ldots, \tilde{v}_{i-1}\}^\perp \).
		\EndFor
		\For{$i = j+1$ to $n$}
		\State Set \( \tilde{v}_i \) to the projection of \( v_i
\) onto \( \spanl \{\tilde{v}_1, \ldots, \tilde{v}_j\}^\perp \).
		\EndFor
	\end{algorithmic}
\end{algorithm}

Since the determinant is preserved under this projection process, we have
$
|\det(V_{J \cup K})| = |\det(\widetilde{V}_{J \cup K})|
$
for all subsets \( K \subseteq \{1,\ldots,n\} \) such that \( J \cap K =
\emptyset \) and \( |J \cup K| = r \). It follows that \( \delta(V, J) =
\delta(\widetilde{V}, J) \). Although this is an equivalent
reformulation in terms of determinant values, we show below that various upper bounds on \( \delta(\widetilde{V}, J) \) are significantly tighter than the corresponding bounds on \( \delta(V, J) \).

\begin{remark}
The projections in Algorithm~\ref{alg:orthogproj} are computationally
inexpensive and admit further optimization in a branch-and-bound
setting.

\begin{enumerate}
	\item In line~3, many projections can be reused to avoid
redundant computation. For instance, suppose the current node in the
branch-and-bound tree corresponds to \( J = \{1,\ldots,j\} \), and the
matrix \( \widetilde{V} \) has already been computed. Since the vectors
\( \tilde{v}_1, \ldots, \tilde{v}_{j+1} \) are pairwise orthogonal, if
the algorithm branches by setting \( x_{j+1} = 1 \), thereby updating \(
J \leftarrow \{1,\ldots,j+1\} \) in the child node, then the projections
of \( v_i \) onto \( \spanl \{\tilde{v}_1, \ldots, \tilde{v}_{i-1}\}^\perp \) for \( i = 2,\ldots,j+1 \) need not be recomputed, provided they are cached appropriately from the parent node.
	
	\item In line~6, since \( \tilde{v}_1, \ldots, \tilde{v}_j \) are orthogonal, the projection of \( v_i \) onto the orthogonal complement of their span has the explicit formula:
	$
	\tilde{v}_i = v_i - \sum_{k=1}^{j} \frac{\tilde{v}_k^\top v_i}{\tilde{v}_k^\top \tilde{v}_k} \tilde{v}_k.
	$
	This expression is computationally efficient.
\end{enumerate}
\end{remark}

\begin{remark}[Role of the rank assumption]
\label{rem:projrank}
The projection reformulation is one of the main ingredients of our
approach: as we show in \Cref{sec:hada} and \Cref{sect:sdprelax},
without it, the Hadamard upper bound is typically very weak,
while the projected matrix $\widetilde{V}$ yields a substantially
tighter bound and similarly tightens the continuous relaxation.

It is important to emphasize that this technique relies in an essential
way on the assumption that the selected submatrix size equals the rank,
i.e., $|J \cup K| = r = \rank(V)$. Once $\widetilde{V}$ is constructed
from $V$ with respect to $J = \{1,\ldots,j\}$, the identity
$|\det(V_{J\cup K})| = |\det(\widetilde{V}_{J\cup K})|$ holds for every
admissible index set with $|J\cup K| = r$, but \emph{fails} for index
sets of cardinality strictly larger than $r$. Consequently, in the more
general 0/1 D-optimality and maximum-entropy sampling settings, where
one may select more than $r$ rows of $V$ (or equivalently, an order-$s$
principal submatrix of an $n\times n$ matrix with $s$ unrelated to the
rank), the projection step is not directly applicable. Our setting
$s = r$ is precisely what allows us to exploit this special structure.
\end{remark}

\section{Upper bounds}
\label{sect:upperbnds}
We now consider several efficient techniques for obtaining upper bounds. 
This includes using the Hadamard inequality as well as a continuous
relaxation.

\subsection{Upper bounds based on Hadamard inequality}
\label{sec:hada}
In this section, we present an efficient upper bound for the maximum
determinant problem. Our approach leverages the Hadamard
inequality~\cite{gjmw2}, which states that the absolute value of the
determinant of a square matrix is bounded above by the product of the
norms of its rows.\footnote{Note that the inequality for $M$ positive
definite is $\det(M) \leq \Pi_i M_{ii}$.} To tighten this bound, we also reformulate the rows of the matrix~$V$ via the projection technique as discussed in the last section.  Numerical results demonstrate that the proposed Hadamard-based upper bound is highly effective, enabling the identification of optimal solutions within reasonable time when embedded in a branch-and-bound framework.

Let \( V \in \mathbb{R}^{n \times r} \) and \( J \subseteq \{1,\ldots,n\} \) be given. For any subset \( K \subseteq \{1,\ldots,n\} \) such that
\begin{equation}
	\label{setK}
	J \cap K = \emptyset, \quad |J \cup K| = r,
\end{equation}
the submatrix \( V_{J \cup K} \) is of size \( r \times r \). Applying the Hadamard inequality yields the bound
\[
|\det(V_{J \cup K})| \leq \prod_{i \in J \cup K} \|v_i\|,
\]
where \( v_i \) denotes the \( i \)-th row of \( V \). This leads to the
following upper bound:
\[
\begin{array}{rcl}
 \delta(V, J)
&\leq &
	\delta_{H}(V, J),
\end{array}
\]
where
\[
\delta_{H}(V, J)=
\max \left\{ \prod_{i \in K} \|v_i\| \;\middle|\; K \text{ satisfies } \eqref{setK} \right\}
\prod_{i \in J} \|v_i\|.
\]

\begin{prop}
	Let \( V \in \mathbb{R}^{n \times r} \) be given, and let \( J \subseteq \{1,\ldots,n\} \) with \( |J| \leq r \). Let \( \widetilde{V} \in \mathbb{R}^{n \times r} \) be the matrix obtained by applying the orthogonal projection process described in Algorithm~\ref{alg:orthogproj} to \( V \) with respect to \( J \). Then,
	$
	\delta_{H}(\widetilde{V}, J) \leq \delta_{H}(V, J).
	$
\end{prop}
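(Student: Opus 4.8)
The plan is to exploit the single elementary fact that an orthogonal projection onto any subspace is non-expansive, so that each row norm entering the product \eqref{ub1} can only shrink when we pass from $V$ to $\widetilde{V}$; the inequality $\delta_{H}(\widetilde{V}, J) \leq \delta_{H}(V, J)$ then follows by comparing the two defining factors. First I would assume without loss of generality that $J = \{1,\ldots,k\}$, exactly as in \Cref{sect:reforproj}, so that every feasible index set $K$ satisfying \eqref{setK} is a subset of $\{k+1,\ldots,n\}$. By construction in Algorithm~\ref{alg:orthogproj}, each row $\tilde{v}_i$ is either equal to $v_i$ (the case $i=1$) or is the image of $v_i$ under an orthogonal projection onto the orthogonal complement of some subspace (the cases $i=2,\ldots,k$ and $i=k+1,\ldots,n$). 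Since orthogonal projections are non-expansive, this yields $\|\tilde{v}_i\| \leq \|v_i\|$ for every $i \in \{1,\ldots,n\}$.

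Next I would split $\delta_{H}$ into its two factors as in \eqref{ub1}. The $J$-factor satisfies $\prod_{i \in J} \|\tilde{v}_i\| \leq \prod_{i \in J} \|v_i\|$, since each individual factor shrinks and all row norms are non-negative. For the $K$-factor I would argue pointwise: for every fixed feasible $K$ we have $\prod_{i \in K} \|\tilde{v}_i\| \leq \prod_{i \in K} \|v_i\| \leq \max\{ \prod_{i \in K'} \|v_i\| : K' \text{ satisfies } \eqref{setK}\}$, and taking the maximum over all feasible $K$ on the left-hand side preserves this bound. Finally, multiplying the two inequalities — legitimate because all four quantities are non-negative — gives $\delta_{H}(\widetilde{V}, J) \leq \delta_{H}(V, J)$.

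There is no substantial obstacle in this argument; it reduces entirely to the contractivity of orthogonal projections. The only point requiring a moment of care is that the maximizing subset $K$ for $\widetilde{V}$ and for $V$ need not coincide, so the $K$-factor cannot be handled by comparing optimizers directly. This is precisely why I would establish the uniform pointwise bound $\prod_{i \in K} \|\tilde{v}_i\| \leq \prod_{i \in K} \|v_i\|$ for each feasible $K$ first, and only then pass to the maximum — a subtlety that is routine but worth stating explicitly.
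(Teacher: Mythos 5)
Your proof is correct and follows essentially the same route as the paper's: both rest on the single observation that orthogonal projection is non-expansive, so $\|\tilde{v}_i\| \leq \|v_i\|$ for every $i$, from which the inequality between the Hadamard bounds follows. Your additional care in comparing the $K$-factors pointwise before taking the maximum is exactly the step the paper compresses into ``the inequality follows,'' so you have simply made the paper's argument explicit.
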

\begin{proof}
It follows from the description of Algorithm~\ref{alg:orthogproj} that \( \|\tilde{v}_i\| \leq \|v_i\| \) for all \( i \), the inequality follows.
\end{proof}

The bound $\delta_{H}(\widetilde{V},J)$ is typically much tighter than $\delta_{H}(V,J)$. In practice, the projection significantly improves the quality of the bound.

\subsection{A continuous relaxation}
\label{sect:sdprelax}

\index{LMI, linear matrix inequality}

For huge problem instances, we may not be able to find the optimal
solution. In this case, it is important to measure the current
(relative) gap \cref{eq:relGAP} between 
the best upper and lower bounds of the optimal value, where the best
lower bound is obtained from the current best feasible solution. This gives the users a good idea about the quality of the feasible solution. To this end, we derive and analyze stronger upper bounds based on convex optimization techniques.

Let $V \in \R^{n\times r}$ and $J \subseteq \{1,\ldots,n\}$ be given.
Define the set:
\begin{equation}
	\label{def_tri}
\triangle_{n} := \left\{  x \in \Rn \mid e^{T}x = r, \, 0\leq x \leq 1 \right\}.
\end{equation}
A binary programming formulation for the original problem \eqref{eq:maina} is given as follows:
\begin{equation}
	\label{eq:ilpdef}
	\begin{array}{rll}
		\max & \det(V^T \Diag(x) V)\\
		\st & x \in \triangle_{n} \cap \{0,1\}^n,\\
		& x_{i} = 1 \text{ for } i \in J.\\
	\end{array}
\end{equation}
Indeed, if $x^{\star}$ is an optimal
solution to \cref{eq:ilpdef}, then $K:= \{ i \in \{1,\dotso,n\} : 
x^{\star}_i = 1\}$ is optimal for \cref{eq:maina}.

If we discard the binary constraints, then we obtain the continuous
relaxation: 
\begin{equation}
	\label{eq:lp}
	\begin{array}{rcll}
\delta_{\rm cont}(V,J) := & \max & \det(V^T \Diag(x) V)\\
&	\st & x \in \triangle_{n},\\
&& x_{i} = 1 \text{ for } i \in J.\\
	\end{array}
\end{equation}
It follows immediately that
$\delta_{\rm cont}(V,J)$ provides an upper bound for the maximum of
\eqref{eq:ilpdef}. Indeed, any binary feasible solution is also feasible
for the continuous relaxation. We note that the function $\log\det(V^T
\Diag(x) V)$ is concave on its domain, and thus the maximization problem \eqref{eq:lp} is a convex optimization problem. Though the constraints are linear, the nonlinear objective means this is not a linear program.

\begin{remark}
The relaxation \eqref{eq:lp} is sometimes referred to
as the \emph{natural bound} for the 0/1 D-optimality problem; see,
e.g., \cite{welch1982branch,ko1998comparison,ponte2025relationship}.
Several other bounds, based on convex relaxations have been developed in recent years.  Most notably, the linx bound \cite{anstreicher2020efficient}, the BQP bound \cite{anstreicher2018maximum}, and the $\Gamma$-bound \cite{ponte2025branch}, \cite{nikolov2015randomized}. Fast ADMM algorithms for computing these bounds have been recently developed in~\cite{ponte2024admm}. Theoretical results showing that one of these bounds dominates another are lacking or limited to special cases \cite{ponte2025branch}.  Our choice of the natural bound is largely motivated by two factors. First, the natural bound is attractive computationally since the determinant takes matrices of order $r$ as opposed to $n$ or $n-r$ in the aforementioned bounds.  In our numerical results, many of the instances have $r \ll n$.  As observed in \cite{ponte2025branch}, the natural bound tends to perform better than the $\Gamma$-bound when $r < n/2$.  Secondly, the natural bound is invariant to scaling of the input matrix $M$.  In contrast, bounds such as linx and BQP are tightest when an optimal scaling is chosen for $M$.  This is a non-trivial step, as described in \cite{anstreicher2020efficient}, for instance.
\end{remark}

We prove that the projection technique also works for the continuous relaxation
\eqref{eq:lp}, namely, if $\widetilde{V}$ is the matrix obtained from
$V$ by applying the projection procedure described in
Algorithm~\eqref{alg:orthogproj}, then $\delta_{\rm cont}(\widetilde{V},J)
\leq \delta_{\rm cont}(V,J)$. To prove this, we derive the following useful
linear algebra results.

\begin{lemma}
	\label{L1}
	$\det \begin{pmatrix} \beta + a & b^T \cr c & D \end{pmatrix} =
\beta \det D +  \det \begin{pmatrix} a & b^T \cr c & D \end{pmatrix}. $
\end{lemma}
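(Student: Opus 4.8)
The plan is to prove this purely by the multilinearity of the determinant, viewed as a function of the first row of the matrix. The left-hand side is the determinant of an $n \times n$ matrix whose first row is $(\alpha + a,\ b^T)$, with the remaining rows forming $(c\ \ D)$. The key observation is that the first row decomposes additively as
\[
(\alpha + a,\ b^T) = (\alpha,\ 0^T) + (a,\ b^T),
\]
where $0$ denotes the zero row vector of the appropriate length, and this decomposition isolates the scalar $\alpha$ in the top-left entry only.

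First I would invoke linearity of the determinant in its first row, holding all other rows fixed, to obtain
\[
\det \begin{pmatrix} \alpha + a & b^T \cr c & D \end{pmatrix}
= \det \begin{pmatrix} \alpha & 0^T \cr c & D \end{pmatrix}
+ \det \begin{pmatrix} a & b^T \cr c & D \end{pmatrix}.
\]
The second term on the right is already exactly the term appearing in the statement, so it remains only to evaluate the first determinant.

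Next I would compute $\det \begin{pmatrix} \alpha & 0^T \cr c & D \end{pmatrix}$ by cofactor (Laplace) expansion along the first row. Since every entry of the first row is zero except the $(1,1)$ entry $\alpha$, the expansion collapses to a single term, giving $\alpha$ times the corresponding minor, which is precisely $\det D$. Hence this determinant equals $\alpha \det D$, and substituting back yields the claimed identity. (Equivalently, one may note the block is lower block-triangular after the zero row is recognized, so its determinant is the product $\alpha \cdot \det D$.)

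There is essentially no serious obstacle here: the result is an immediate consequence of multilinearity together with a one-term cofactor expansion. The only point requiring mild care is bookkeeping of the block dimensions—confirming that $a$ and $\alpha$ are scalars, $b^T$ and $c$ are conformable row and column vectors, and $D$ is square—so that the additive split of the first row and the minor extraction are valid. Once the partition is fixed, the two steps above complete the proof.
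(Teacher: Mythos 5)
Your proof is correct and takes essentially the same approach as the paper's: both arguments split off the scalar $\alpha$ via multilinearity of the determinant and then evaluate the resulting block-triangular determinant as $\alpha \det D$. The only difference is cosmetic---you decompose the first row as $(\alpha,\ 0^T) + (a,\ b^T)$, whereas the paper decomposes the first column---so the two arguments are transposes of one another.
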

\begin{proof}
Note that	the first column can be written as $\beta \begin{pmatrix}
		1\\
		0
	\end{pmatrix} + \begin{pmatrix}
	a\\
	c
	\end{pmatrix}$.
	Applying the multilinearity of the determinant, we obtain the desired equation.
\end{proof}

\begin{prop}
	\label{P1} 
	Let $A \in {\mathbb R}^{k\times k}$ and $\begin{pmatrix} B & C \cr C^T & D \end{pmatrix}\in {\mathbb R}^{n\times n}$
	be positive semidefinite. 
	Then 
	$\det \begin{pmatrix} A + B & C \cr C^T & D \end{pmatrix} \ge \det A \det D.$
\end{prop}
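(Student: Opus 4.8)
The plan is to prove the inequality by combining an orthogonal change of basis that diagonalizes $A$ with the multilinear expansion of the determinant furnished by \Cref{L1}. First I would use that $A$ is symmetric positive semidefinite to write $A = Q\Lambda Q^{T}$ with $Q$ orthogonal and $\Lambda = \Diag(\lambda)$ having nonnegative entries $\lambda_i \geq 0$. The congruence $X \mapsto P^{T} X P$ with $P = \begin{pmatrix} Q & 0 \\ 0 & I\end{pmatrix}$ preserves positive semidefiniteness and, since $\det Q = \pm 1$, preserves the relevant determinants: it sends $A$ to $\Lambda$, sends $\begin{pmatrix} B & C \\ C^{T} & D\end{pmatrix}$ to a congruent (hence still positive semidefinite) matrix $\begin{pmatrix} B' & C' \\ (C')^{T} & D\end{pmatrix}$, and leaves $\det A = \det \Lambda = \prod_i \lambda_i$ and $\det D$ unchanged. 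Thus it suffices to prove the statement with $A$ replaced by the diagonal matrix $\Lambda$.

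Next I would expand $\det\begin{pmatrix} \Lambda + B' & C' \\ (C')^{T} & D\end{pmatrix}$ by iterating the multilinearity identity of \Cref{L1} over each of the first $k$ columns: the $i$-th such column splits as $\lambda_i e_i$ plus the $i$-th column of $\begin{pmatrix} B' \\ (C')^{T}\end{pmatrix}$. Carrying this out for all $i \in \{1,\ldots,k\}$ produces a sum over subsets $S \subseteq \{1,\ldots,k\}$,
\[
\det\begin{pmatrix} \Lambda + B' & C' \\ (C')^{T} & D\end{pmatrix} = \sum_{S \subseteq \{1,\ldots,k\}} \Big(\prod_{i \in S}\lambda_i\Big)\, \det M_S,
\]
where $M_S$ is the matrix whose columns indexed by $S$ are the standard basis vectors $e_i$ and whose remaining columns are inherited from $\begin{pmatrix} B' & C' \\ (C')^{T} & D\end{pmatrix}$.

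The crux is to evaluate $\det M_S$. Because each column $e_i$ with $i \in S$ has its single nonzero entry in row $i$, a symmetric permutation moving the indices of $S$ to the front brings $M_S$ into block upper-triangular form $\begin{pmatrix} I & * \\ 0 & N_S\end{pmatrix}$, so that $\det M_S = \det N_S$, where $N_S$ is exactly the principal submatrix of $\begin{pmatrix} B' & C' \\ (C')^{T} & D\end{pmatrix}$ obtained by deleting the rows and columns indexed by $S$. Since that block matrix is positive semidefinite, every such principal submatrix is positive semidefinite, whence $\det M_S = \det N_S \geq 0$; combined with $\lambda_i \geq 0$ this makes every summand nonnegative. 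Finally, the single term $S = \{1,\ldots,k\}$ equals $\big(\prod_i \lambda_i\big)\det D = \det A \, \det D$ (here $N_S = D$), and discarding the remaining nonnegative terms yields the claimed inequality.

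The step I expect to require the most care is the bookkeeping in this expansion: verifying that the sign attached to each term is $+1$ and that $M_S$ genuinely reduces to the \emph{principal} submatrix $N_S$ rather than some off-principal minor. The permutation/block-triangular argument settles both at once, but one must be sure the same permutation acts on rows and columns, so that the reduction really lands on a principal submatrix of the positive semidefinite matrix, which is precisely what guarantees $\det N_S \geq 0$.
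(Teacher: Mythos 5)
Your proof is correct, and it takes a genuinely different (though related) route from the paper's. The paper argues by induction on $k$: after the same reduction to diagonal $A$, it applies the $k=1$ case (Lemma~\ref{L1} plus positive semidefiniteness) to peel off the entry $a_1$, discards one nonnegative term, and invokes the induction hypothesis on the $(n-1)\times(n-1)$ submatrix obtained by deleting the first row and column. You instead unroll the multilinearity completely in one shot: expanding all $k$ perturbed columns at once yields the exact identity
\[
\det \begin{pmatrix} \Lambda + B' & C' \cr (C')^T & D \end{pmatrix} \;=\; \sum_{S \subseteq \{1,\ldots,k\}} \Bigl(\prod_{i \in S} \lambda_i\Bigr) \det\bigl(W_{S^c}\bigr),
\]
where $W_{S^c}$ is the principal submatrix of the positive semidefinite matrix $\begin{pmatrix} B' & C' \cr (C')^T & D \end{pmatrix}$ obtained by deleting the rows and columns in $S$, and every summand is nonnegative. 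Your permutation argument for identifying each coefficient as a \emph{principal} minor (with sign $+1$) is the step the induction lets the paper avoid, since peeling one index at a time only ever produces leading principal submatrices; in exchange, your expansion proves a refinement of the inequality (an exact representation of the determinant as a sum of $2^k$ nonnegative terms, of which $\det A \det D$ is the $S=\{1,\ldots,k\}$ term), whereas the induction only tracks the single term it needs. You also justify the ``without loss of generality, $A$ is diagonal'' reduction explicitly via the orthogonal congruence $X \mapsto P^T X P$, a point the paper leaves implicit.
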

\begin{proof} We proceed by induction on $k$, the size of $A$. When $k=1$, the result follows directly from Lemma \ref{L1}. Suppose that the result has been proven for $A$ up to size $k-1$.
	
Without loss of generality, we assume that $A$ is a diagonal matrix, i.e., $A={\rm Diag} (a_i)_{i=1}^k$, where $a_i \ge 0$, $i=1,\ldots , k$. Let $P:{\mathbb R}^{k} \to {\mathbb R}^{k-1}$ be the projection on the last $k-1$ entries. By the case $k=1$, we have that
	$$  \det \begin{pmatrix} A + B & C \cr C^T & D \end{pmatrix} \ge a_1 \det  \begin{pmatrix} PAP^T + PBP^T & CP^T \cr PC^T & D \end{pmatrix}. $$
	By the induction assumption, we get that
	$$ \det  \begin{pmatrix} PAP^T + PBP^T & CP^T \cr PC^T & D \end{pmatrix} \ge \det(PAP^T) \det(D) . $$
	Since, $\det A = a_1  \det(PAP^T)$, the result follows.
\end{proof}

Now we are ready to prove the main theoretical result.
\begin{theorem}
	Let $V \in \R^{n\times r}$ and $J \subseteq \{1,\ldots,n\}$ be given.	Let $\widetilde{V}$ be the matrix obtained from $V$ by applying the
projection procedure in Algorithm~\ref{alg:orthogproj}. Then 
$\delta_{\rm cont}(\widetilde{V} , J) \leq \delta_{\rm cont}(V,J).$
\end{theorem}
\begin{proof}
Without loss of generality, we assume that the rows in $V$ indexed by $J$ are linearly independent and $J =\{ 1, \ldots , k \}$ for some $k< r$. Note that the first $k$ rows of $\widetilde{V}$ are orthogonal. We may multiply $\widetilde{V}$ on the right with a unitary matrix $U$, and make the first $k$ rows of $\widetilde{V}$ to be of the form
	$\begin{pmatrix} \widetilde{\Delta} & 0 \end{pmatrix}$, where $\widetilde{\Delta}$ is a $k\times k$ positive definite diagonal matrix. 	Indeed, if $w_1^T, \ldots , w_k^T$ are the first $k$ rows of $V$ and are all nonzero, one would take 
	$$ U= \left( \frac{w_1}{\| w_1 \|} \;\cdots\;  \frac{w_k}{\| w_k \|} \; u_{k+1} \;\cdots\; u_r \right)\!, $$
	where $\{ u_{k+1} , \ldots , u_r \}$ is an orthonormal basis for 
$(\spanl \{ w_i \}_{i=1}^k)^\perp$.

	Thus we have that we may assume that
	$\widetilde{V} =\begin{pmatrix} \widetilde{\Delta} & 0 \cr 0 & B \end{pmatrix} , $
	for some $(n-k)\times (r-k)$ matrix $B$. The way, $  \widetilde{V}$ was constructed from $V$, gives that
	${V} =\begin{pmatrix} L\widetilde{\Delta} & 0 \cr A & B \end{pmatrix} , $
	where $L$ is a lower triangular matrix with $1$'s on the diagonal, and $A$ is of size $(n-k)\times k$.
	
	Suppose that $x^{*}$ yields the maximum $ \delta_{\rm cont}(\widetilde{V} , J)$. Note that $x_{j}^{*}=1$ for $j=1,\ldots ,k$. Let $y = (x_{i})_{i=k+1}^{n}$ be the vector consisting of the remaining $n-k$ entries of $x^{*}$. Thus $\delta_{\rm cont}(\widetilde{V} , J ) = \det \widetilde{\Delta}^2 \det (B^T {\rm Diag} (y) B).$
	Now 
	{\small
	$$\begin{array}{rll}
		\delta_{\rm cont}(V, J ) &\ge& \det (V^T {\rm Diag}(x^{*}) V ) \\[3pt]
		&=& \det \!\begin{pmatrix} \widetilde{\Delta}L^TL\widetilde{\Delta} \!+\! A^T {\rm Diag} (y) A & A^T {\rm Diag} (y) B \cr B^T {\rm Diag} (y) A & B^T {\rm Diag} (y) B \end{pmatrix} \\[3pt]
		&\ge& \det ( \widetilde{\Delta}L^TL\widetilde{\Delta}) \det (B^T {\rm Diag} (y) B)  \\[3pt]
		&=& \delta_{\rm cont}(\widetilde{V} , J) \\
	\end{array}$$
	}
	where the last inequality follows from Proposition~\ref{P1} and we used that $\det(\widetilde{\Delta}L^TL\widetilde{\Delta})= \det \widetilde{\Delta}^2$ due to $\det L=1$.

\end{proof}

\section{Numerics}
\label{sect:numerics}

In this section, we present our numerical experiments. All computations were performed on a Mac Studio (2023) equipped with an Apple M2 Ultra chip, 128~GB of RAM, and running macOS 14.1.1 (build 23B81). The branch-and-bound algorithm was implemented in MATLAB R2024b. The continuous relaxation \eqref{eq:lp} was formulated as a linear SDP and solved using MOSEK version 11~\cite{aps2019mosek}.

We report the objective value as \( \log\det(V^{\top} \Diag(x) V) \) or  \( \log_{2}\det(V^{\top} \Diag(x) V) \), as the determinant can take extremely large values in high-dimensional settings.

We solve the problem using the branch-and-bound algorithm, which
incorporates Hadamard-type inequality bounds described in
Section~\ref{sec:hada}. The best objective value obtained from the
branch-and-bound process is recorded as the \textbf{lower bound (LB)}.
Additionally, we solve the continuous relaxation \eqref{eq:lp} to obtain
an \textbf{upper bound (UB)} on the optimal value. Both bounds are
nonnegative.

To evaluate the quality of the solutions, we compute the normalized duality gap:
\begin{equation}
\label{eq:relGAP}
\text{GAP} = \frac{\text{UB} - \text{LB}}
            {\frac {|\text{UB}|+ |\text{LB}|}2  + 1}.
\end{equation}

A time limit of 10 minutes is imposed for the branch-and-bound
algorithm. The algorithm \emph{certifies optimality} when all nodes in
the branch-and-bound tree have been either explored or pruned, so that
no unexplored node can yield an objective value exceeding the current
best feasible solution. When this occurs within the time limit, the
corresponding LB is marked with an asterisk~(*) in the tables. For both
the branch-and-bound and continuous relaxation approaches, we report the
runtime in cpu seconds.

\subsection{The UCI Machine Learning Repository}

We evaluate our algorithm using datasets from the UCI Machine Learning
Repository \cite{frank2010uci}. Each dataset is represented by a matrix
\( V \in \mathbb{R}^{n \times r} \), where each row corresponds to an
observation and each column to a feature. We restrict our experiments to
datasets that satisfy the following two conditions: (1) the matrix \( V
\) contains only numerical values, and (2) the number of observations
exceeds the number of features, i.e., \( n > r \). If \( V \) contains
linearly dependent columns, we extract a subset of linearly independent
columns using the QR factorization. With slight abuse of notation, we continue to denote the resulting matrix by \( V \).

\Cref{tab:full_uci1,tab:full_uci2}, \cpageref{tab:full_uci1,tab:full_uci2}, 
present a representative subset of results. We report: problem
dimensions (\(n\), \(r\)), objective values (in log base $e$) obtained
by both methods (LB from branch-and-bound, UB from continuous
relaxation), computation times in seconds, and the relative duality gap
from \cref{eq:relGAP}.

\paragraph{Observations.}
Based on the numerical results, we highlight the following:
\begin{itemize}
	\item The branch-and-bound algorithm successfully solves several small- and medium-sized instances to optimality within seconds (e.g., \texttt{Lenses}, \texttt{Iris}, \texttt{Ecoli}, \texttt{Yeast}), as indicated by the asterisk next to the LB.
	\item For larger or more complex datasets (e.g., \texttt{Parkinsons}, \texttt{Image\_Segmentation}), the branch-and-bound method often reaches the time limit without proving optimality, although it still provides meaningful lower bounds.
	\item The continuous relaxation is highly efficient in most cases, typically completing in under one second. However, its quality varies significantly across datasets. In two large-scale instances, the solver exceeded our machine’s memory or time constraints; these are labeled as N.A.
	\item The duality gap is negligible when the branch-and-bound algorithm reaches optimality or a tight bound, indicating the relaxation is informative in those cases. Conversely, large duality gaps (e.g., \texttt{Vertebral\_Column}, \texttt{User\_Knowledge}) suggest weaker continuous relaxations and emphasize the need for exact methods.
\end{itemize}

Overall, the results demonstrate the effectiveness of the proposed approach on small to moderate-sized datasets and illustrate the trade-off between computational time and solution quality for larger problems.

\begin{table*}[!t]
	\centering
	\footnotesize
	\setlength{\tabcolsep}{2.85pt}%
	\renewcommand{\arraystretch}{0.86}%
	\begin{tabular}{|l|r|r|c|r|c|c|c|}
		\hline
		\multirow{2}{*}{Dataset} & \multirow{2}{*}{$n$} & \multirow{2}{*}{$r$} & \multicolumn{2}{c|}{Branch-and-Bound}   & \multicolumn{2}{c|}{Continuous Relaxation}   & \multirow{2}{*}{GAP} \\  \cline{4-7}
		& & & LB &  Time & UB & Time &  \\ 
		\hline
Challenger$\_$USA$\_$Space$\_$Shuttle$\_$O-Ring                             & $23$      & $4$     & $-17.3036(*)$ & $0.72$      & $-17.1441$      & $0.01$          & $0.01$         \\
Lenses                                                                      & $24$      & $3$     & $-0.94001(*)$ & $0.41$      & $-0.94001$      & $0.01$          & $0.00$         \\
Soybean$\_$(Small)                                                          & $47$      & $20$    & $-48.5653$   & $600$       & $-44.3561$      & $0.12$          & $0.09$         \\
Daily$\_$Demand$\_$Forecasting$\_$Orders                                    & $60$      & $12$    & $-136.6481$  & $600$       & $-125.9895$     & $0.07$          & $0.08$         \\
Cervical$\_$Cancer$\_$Behavior$\_$Risk                                      & $72$      & $19$    & $-11.748$    & $600$       & $-8.1748$       & $0.12$          & $0.33$         \\
Hepatitis                                                                   & $80$      & $19$    & $-168.6379$  & $600$       & $-165.6924$     & $0.51$          & $0.02$         \\
Fertility                                                                   & $100$     & $9$     & $7.4072(*)$  & $76.72$     & $8.3906$        & $0.02$          & $0.11$         \\
Zoo                                                                         & $101$     & $16$    & $-52.8589$   & $600$       & $-50.5052$      & $0.05$          & $0.04$         \\
Breast$\_$Cancer$\_$Coimbra                                                 & $116$     & $9$     & $-57.447(*)$ & $11.19$     & $-57.1875$      & $0.05$          & $0.00$         \\
Primary$\_$Tumor                                                            & $132$     & $17$    & $-16.0921$   & $600$       & $-12.1534$      & $0.07$          & $0.26$         \\
Higher$\_$Education$\_$Students$\_$Performance$\_$Evaluation                & $145$     & $31$    & $-56.8203$   & $600$       & $-46.5442$      & $0.24$          & $0.20$         \\
Iris                                                                        & $150$     & $4$     & $-9.6767(*)$ & $1.09$      & $-9.3734$       & $0.02$          & $0.03$         \\
Hayes-Roth                                                                  & $160$     & $4$     & $-0.29403(*)$ & $0.53$      & $-0.1878$       & $0.02$          & $0.09$         \\
Wine                                                                        & $178$     & $13$    & $-147.6318$  & $600$       & $-145.8124$     & $0.43$          & $0.01$         \\
Breast$\_$Cancer$\_$Wisconsin$\_$(Prognostic)                               & $194$     & $33$    & $-556.5208$  & $600$       & $-438.696$      & $1.11$          & $0.24$         \\
Parkinsons                                                                  & $195$     & $20$    & $-332.9395$  & $600$       & $-254.6942$     & $0.41$          & $0.27$         \\
Image$\_$Segmentation                                                       & $210$     & $19$    & $-280.9973$  & $600$       & $-190.3849$     & $0.29$          & $0.38$         \\
Glass$\_$Identification                                                     & $214$     & $9$     & $-61.7463$   & $600$       & $-61.0086$      & $0.21$          & $0.01$         \\
Soybean$\_$(Large)                                                          & $266$     & $35$    & $-70.8578$   & $600$       & $-61.1069$      & $0.37$          & $0.15$         \\
SPECTF$\_$Heart                                                             & $267$     & $44$    & $-61.7682$   & $600$       & $-51.4834$      & $1.47$          & $0.18$         \\
SPECT$\_$Heart                                                              & $267$     & $22$    & $28.2914$    & $600$       & $34.7918$       & $0.11$          & $0.20$         \\
Statlog$\_$(Heart)                                                          & $270$     & $13$    & $-105.021$   & $600$       & $-102.7533$     & $0.27$          & $0.02$         \\
Heart$\_$Disease                                                            & $297$     & $13$    & $-104.9455$  & $600$       & $-102.7195$     & $0.22$          & $0.02$         \\
Heart$\_$Failure$\_$Clinical$\_$Records                                     & $299$     & $12$    & $-236.0395$  & $600$       & $-179.6787$     & $0.23$          & $0.27$         \\
Haberman's$\_$Survival                                                      & $306$     & $3$     & $-2.2252(*)$ & $0.02$      & $-2.2252$       & $0.03$          & $0.00$         \\
Vertebral$\_$Column                                                         & $310$     & $6$     & $-54.6542$   & $600$       & $-27.9576$      & $0.12$          & $0.63$         \\
Ecoli                                                                       & $336$     & $7$     & $-4.7142(*)$ & $1.07$      & $-4.221$        & $0.06$          & $0.09$         \\
Land$\_$Mines                                                               & $338$     & $3$     & $0.35537(*)$ & $0.16$      & $0.61624$       & $0.03$          & $0.18$         \\
Liver$\_$Disorders                                                          & $345$     & $5$     & $-9.6506(*)$ & $0.29$      & $-9.58$         & $0.05$          & $0.01$         \\
Ionosphere                                                                  & $351$     & $33$    & $80.2523$    & $600$       & $87.3991$       & $0.87$          & $0.08$         \\
Dermatology                                                                 & $358$     & $34$    & $-205.4$     & $600$       & $-195.2633$     & $0.48$          & $0.05$         \\
Auto$\_$MPG                                                                 & $392$     & $7$     & $-66.0296$   & $600$       & $-65.7233$      & $0.24$          & $0.00$         \\
User$\_$Knowledge$\_$Modeling                                               & $403$     & $5$     & $-0.56247(*)$ & $0.21$      & $-0.20268$      & $0.05$          & $0.26$         \\
Real$\_$Estate$\_$Valuation                                                 & $414$     & $6$     & $-70.8655$   & $600$       & $-58.7011$      & $0.20$          & $0.18$         \\
MONK's$\_$Problems                                                          & $432$     & $6$     & $-4.1125(*)$ & $52.93$     & $-3.462$        & $0.07$          & $0.14$         \\
Wholesale$\_$customers                                                      & $440$     & $7$     & $-30.8299$   & $600$       & $-26.1566$      & $0.25$          & $0.16$         \\
Breast$\_$Cancer$\_$Wisconsin$\_$(Diagnostic)                               & $569$     & $30$    & $-516.6465$  & $600$       & $-420.3534$     & $2.43$          & $0.21$         \\
Balance$\_$Scale                                                            & $625$     & $4$     & $1.1087(*)$  & $0.16$      & $1.2756$        & $0.10$          & $0.08$         \\
Breast$\_$Cancer$\_$Wisconsin$\_$(Original)                                 & $683$     & $9$     & $2.0562(*)$  & $29.06$     & $2.8805$        & $0.24$          & $0.24$         \\
Statlog$\_$(Australian$\_$Credit$\_$Approval)                               & $690$     & $14$    & $-236.1593$  & $600$       & $-207.0687$     & $0.86$          & $0.13$         \\
National$\_$Poll$\_$on$\_$Healthy$\_$Aging$\_$(NPHA)                        & $714$     & $14$    & $-19.859$    & $600$       & $-17.265$       & $0.39$          & $0.13$         \\
Absenteeism$\_$at$\_$work                                                   & $740$     & $19$    & $-124.532$   & $600$       & $-118.4592$     & $1.35$          & $0.05$         \\
Blood$\_$Transfusion$\_$Service$\_$Center                                   & $748$     & $3$     & $-20.546(*)$ & $0.18$      & $-20.5458$      & $0.30$          & $0.00$         \\
Energy$\_$Efficiency                                                        & $768$     & $7$     & $-64.8916$   & $600$       & $-64.2346$      & $0.72$          & $0.01$         \\
Mammographic$\_$Mass                                                        & $830$     & $5$     & $-20.8851(*)$ & $2.41$      & $-20.632$       & $0.37$          & $0.01$         \\
Statlog$\_$(Vehicle$\_$Silhouettes)                                         & $845$     & $18$    & $-126.1669$  & $600$       & $-122.8075$     & $3.13$          & $0.03$         \\
\hline
	\end{tabular}
	\caption{Full UCI dataset results (part 1 of 2).}
	\label{tab:full_uci1}
\end{table*}

\begin{table*}[!t]
	\centering
	 \footnotesize
	\setlength{\tabcolsep}{2.85pt}%
	\renewcommand{\arraystretch}{0.86}%
	\begin{tabular}{|l|c|c|c|c|c|c|c|}
		\hline
		\multirow{2}{*}{Dataset} & \multirow{2}{*}{$n$} & \multirow{2}{*}{$r$} & \multicolumn{2}{c|}{Branch-and-Bound}   & \multicolumn{2}{c|}{Continuous Relaxation}   & \multirow{2}{*}{GAP} \\  \cline{4-7}
		 & & & LB &  Time & UB & Time &  \\ 
		\hline
Maternal$\_$Health$\_$Risk                                                  & $1014$    & $6$     & $-13.7333(*)$ & $510.77$    & $-13.6184$      & $0.46$          & $0.01$         \\
Concrete$\_$Compressive$\_$Strength                                         & $1030$    & $8$     & $-24.6657$   & $600$       & $-22.9041$      & $0.59$          & $0.07$         \\
Diabetic$\_$Retinopathy$\_$Debrecen                                         & $1151$    & $18$    & $-139.2206$  & $600$       & $-135.1317$     & $4.17$          & $0.03$         \\
Website$\_$Phishing                                                         & $1353$    & $9$     & $17.5505$    & $600$       & $18.3885$       & $0.94$          & $0.04$         \\
Banknote$\_$Authentication                                                  & $1372$    & $4$     & $-3.9593(*)$ & $2.48$      & $-3.8468$       & $0.56$          & $0.02$         \\
Hepatitis$\_$C$\_$Virus$\_$(HCV)$\_$for$\_$Egyptian$\_$patients             & $1385$    & $28$    & $-531.9193$  & $600$       & $-358.6365$     & $5.68$          & $0.39$         \\
Contraceptive$\_$Method$\_$Choice                                           & $1473$    & $9$     & $-42.9772$   & $600$       & $-41.4642$      & $1.57$          & $0.04$         \\
Yeast                                                                       & $1484$    & $8$     & $-4.8033(*)$ & $156.85$    & $-4.1933$       & $0.69$          & $0.11$         \\
Airfoil$\_$Self-Noise                                                       & $1503$    & $5$     & $-73.1905$   & $600$       & $-57.2507$      & $1.49$          & $0.24$         \\
Drug$\_$Consumption$\_$(Quantified)                                         & $1885$    & $12$    & $0.18583$    & $600$       & $2.089$         & $2.33$          & $0.89$         \\
Steel$\_$Plates$\_$Faults                                                   & $1941$    & $27$    & $-695.4052$  & $600$       & $-344.2675$     & $7.07$          & $0.67$         \\
Auction$\_$Verification                                                     & $2043$    & $7$     & $-40.9852$   & $600$       & $-40.1057$      & $2.15$          & $0.02$         \\
Cardiotocography                                                            & $2126$    & $20$    & $-173.6264$  & $600$       & $-147.7546$     & $8.46$          & $0.16$         \\
AIDS$\_$Clinical$\_$Trials$\_$Group$\_$Study$\_$175                         & $2139$    & $23$    & $-269.8734$  & $600$       & $-258.93$       & $10.31$         & $0.04$         \\
National$\_$Health$\_$and$\_$Nutrition$\_$Health$\_$Survey$\_$2013-2014$\_$ & $2278$    & $7$     & $-43.3426$   & $600$       & $-42.9178$      & $4.22$          & $0.01$         \\
Statlog$\_$(Image$\_$Segmentation)                                          & $2310$    & $19$    & $-270.7187$  & $600$       & $-184.6101$     & $9.37$          & $0.38$         \\
Iranian$\_$Churn                                                            & $3150$    & $13$    & $-161.5337$  & $600$       & $-153.2374$     & $13.87$         & $0.05$         \\
Ozone$\_$Level$\_$Detection                                                 & $3695$    & $72$    & $-1021.1334$ & $601$       & $-963.1435$     & $86.59$         & $0.06$         \\
Rice$\_$(Cammeo$\_$and$\_$Osmancik)                                         & $3810$    & $7$     & $-90.6916$   & $600$       & $-78.7753$      & $15.77$         & $0.14$         \\
Predict$\_$Students'$\_$Dropout$\_$and$\_$Academic$\_$Success               & $4424$    & $36$    & $-459.3387$  & $600$       & $-437.3949$     & $25.70$         & $0.05$         \\
Spambase                                                                    & $4601$    & $57$    & $-809.8656$  & $600$       & $-785.3801$     & $32.93$         & $0.03$         \\
Waveform$\_$Database$\_$Generator$\_$(Version$\_$1)                         & $5000$    & $21$    & $-14.459$    & $600$       & $-7.214$        & $6.27$          & $0.61$         \\
Page$\_$Blocks$\_$Classification                                            & $5473$    & $10$    & $-105.606$   & $600$       & $-92.8885$      & $19.13$         & $0.13$         \\
Optical$\_$Recognition$\_$of$\_$Handwritten$\_$Digits                       & $5620$    & $62$    & $-5.1735$    & $601$       & $23.6513$       & $45.87$         & $1.87$         \\
Parkinsons$\_$Telemonitoring                                                & $5875$    & $19$    & $-284.7582$  & $600$       & $-234.9981$     & $22.02$         & $0.19$         \\
Statlog$\_$(Landsat$\_$Satellite)                                           & $6435$    & $36$    & $-106.3612$  & $600$       & $-90.3697$      & $39.49$         & $0.16$         \\
Wine$\_$Quality                                                             & $6497$    & $11$    & $-95.0783$   & $600$       & $-94.1656$      & $43.71$         & $0.01$         \\
Musk$\_$(Version$\_$2)                                                      & $6598$    & $166$   & $-332.7221$  & $601$       & $-245.6389$     & $814.92$        & $0.30$         \\
Taiwanese$\_$Bankruptcy$\_$Prediction                                       & $6819$    & $24$    & $-6.52$      & $601$       & $-1.6099$       & $25.65$         & $0.97$         \\
ISOLET                                                                      & $7797$    & $617$   & $1296.2834$  & $609$       & N.A.            & N.A.            & N.A.           \\
Combined$\_$Cycle$\_$Power$\_$Plant                                         & $9568$    & $4$     & $-20.0107$   & $600$       & $-19.0065$      & $29.76$         & $0.05$         \\
Electrical$\_$Grid$\_$Stability$\_$Simulated$\_$Data$\_$                    & $10000$   & $11$    & $-23.6946$   & $600$       & $-21.217$       & $31.47$         & $0.11$         \\
Pen-Based$\_$Recognition$\_$of$\_$Handwritten$\_$Digits                     & $10992$   & $16$    & $3.4228$     & $600$       & $8.5114$        & $38.97$         & $0.73$         \\
Phishing$\_$Websites                                                        & $11055$   & $30$    & $82.465$     & $600$       & $93.1696$       & $138.58$        & $0.12$         \\
Dry$\_$Bean                                                                 & $13611$   & $16$    & $-391.885$   & $601$       & $-240.8034$     & $153.03$        & $0.48$         \\
EEG$\_$Eye$\_$State                                                         & $14980$   & $14$    & $-183.4523$  & $600$       & $-158.72$       & $181.37$        & $0.14$         \\
HTRU2                                                                       & $17898$   & $8$     & $-47.5217$   & $601$       & $-46.257$       & $214.40$        & $0.03$         \\
MAGIC$\_$Gamma$\_$Telescope                                                 & $19020$   & $10$    & $-47.9973$   & $600$       & $-45.7039$      & $363.05$        & $0.05$         \\
Polish$\_$Companies$\_$Bankruptcy                                           & $19967$   & $65$    & $-1361.4574$ & $604$       & $-922.5883$     & $795.90$        & $0.38$         \\
Letter$\_$Recognition                                                       & $20000$   & $16$    & $-8.9559$    & $600$       & $-5.1585$       & $184.55$        & $0.47$         \\
Superconductivty$\_$Data                                                    & $21263$   & $81$    & $-1037.7763$ & $604$       & $-905.8072$     & $3503.56$       & $0.14$         \\
NATICUSdroid$\_$(Android$\_$Permissions)                                    & $29332$   & $85$    & $128.0883$   & $600$       & $165.7042$      & $1547.99$       & $0.25$         \\
Default$\_$of$\_$Credit$\_$Card$\_$Clients                                  & $30000$   & $23$    & $-270.6213$  & $600$       & $-186.0217$     & $1054.13$       & $0.37$         \\
Online$\_$News$\_$Popularity                                                & $39644$   & $55$    & $-1126.6942$ & $600$       & N.A.            & N.A.            & N.A.           \\
\hline
	\end{tabular}
	\caption{Full UCI dataset results (part 2 of 2).}
	\label{tab:full_uci2}
\end{table*}

\subsection{Odd Cycle Packing (OCP)}
We also apply our algorithm to the odd cycle packing problem as this
problem was used to show that MAXDET is NP-hard; see \cite{di2015largest}.
Given a simple undirected graph, we want to find a maximum 
family of vertex-disjoint odd cycles. 
Given a graph $G$ with node-edge incidence matrix $A_G$, for every
odd cycle $C$ of $G$, we get that the square submatrix of $A_G$ with 
rows corresponding to the nodes of $C$
and columns corresponding to the edges of $G$ has determinant $\pm 2$.
See e.g.,~\cite{di2015largest}.

We choose a random simple undirected graph $G$ with incidence matrix
$V^T=A_G$. We then apply our relaxed problem of maximizing the
determinant. We use the following MATLAB code to generate the random graphs. The results are shown in \Cref{tab:full_ocp}. We report the objective values in log base $2$.
\begin{lstlisting}[style=Matlab-editor]
	G = graph(true(r), 'omitselfloops');
	p = randperm(numedges(G), n);
	G = graph(G.Edges(p, :));
	V = full(abs(incidence(G)))';
\end{lstlisting}

\begin{table*}[!t]
	\centering
	\footnotesize
	\setlength{\tabcolsep}{2.85pt}%
	\renewcommand{\arraystretch}{0.86}%
	\begin{tabular}{|c|c|c|c|c|c|c|}
		\hline
		 \multirow{2}{*}{$n$} & \multirow{2}{*}{$r$} & \multicolumn{2}{c|}{Branch-and-Bound}   & \multicolumn{2}{c|}{Continuous Relaxation}   & \multirow{2}{*}{GAP} \\  \cline{3-6}
		& & LB &  Time & UB & Time &  \\ 
		\hline
$10$      & $5$     & $2(*)$       & $0.01$     & $4.3399$        & $0.00$          & $0.51$          \\
$11$      & $6$     & $4(*)$       & $0.00$     & $4.7549$        & $0.00$          & $0.13$          \\
$12$      & $6$     & $4(*)$       & $0.00$     & $4.937$         & $0.00$          & $0.16$          \\
$13$      & $7$     & $4(*)$       & $0.00$     & $5.4449$        & $0.00$          & $0.23$          \\
$14$      & $7$     & $4(*)$       & $0.00$     & $5.5565$        & $0.00$          & $0.25$          \\
$15$      & $8$     & $4(*)$       & $0.01$     & $5.7443$        & $0.00$          & $0.28$          \\
$16$      & $8$     & $4(*)$       & $0.01$     & $6.2$           & $0.00$          & $0.34$          \\
$17$      & $9$     & $4(*)$       & $0.03$     & $6.5274$        & $0.01$          & $0.38$          \\
$18$      & $9$     & $6(*)$       & $0.03$     & $7.3216$        & $0.00$          & $0.16$          \\
$19$      & $10$    & $2(*)$       & $0.40$     & $6.2492$        & $0.01$          & $0.76$          \\
$20$      & $10$    & $4(*)$       & $0.13$     & $7.3204$        & $0.01$          & $0.47$          \\
$21$      & $11$    & $4(*)$       & $0.23$     & $7.5539$        & $0.01$          & $0.49$          \\
$22$      & $11$    & $6(*)$       & $0.07$     & $8.4524$        & $0.01$          & $0.28$          \\
$23$      & $12$    & $4(*)$       & $0.51$     & $7.6259$        & $0.01$          & $0.50$          \\
$24$      & $12$    & $6(*)$       & $0.29$     & $9.008$         & $0.01$          & $0.34$          \\
$25$      & $13$    & $6(*)$       & $0.55$     & $8.4247$        & $0.02$          & $0.28$          \\
$26$      & $13$    & $4(*)$       & $1.99$     & $8.2512$        & $0.02$          & $0.56$          \\
$27$      & $14$    & $6(*)$       & $1.29$     & $9.4815$        & $0.01$          & $0.38$          \\
$28$      & $14$    & $6(*)$       & $2.14$     & $9.7649$        & $0.01$          & $0.40$          \\
$29$      & $15$    & $6(*)$       & $7.71$     & $9.7195$        & $0.01$          & $0.40$          \\
$30$      & $15$    & $8(*)$       & $8.28$     & $11.4575$       & $0.01$          & $0.31$          \\
$31$      & $16$    & $8(*)$       & $3.21$     & $11.5478$       & $0.02$          & $0.32$          \\
$32$      & $16$    & $6(*)$       & $26.26$    & $11.4689$       & $0.01$          & $0.54$          \\
$33$      & $17$    & $8(*)$       & $27.10$    & $12.0676$       & $0.02$          & $0.35$          \\
$34$      & $16$    & $10(*)$      & $2.49$     & $12.6579$       & $0.01$          & $0.21$          \\
$35$      & $18$    & $6(*)$       & $206.69$   & $11.5627$       & $0.07$          & $0.54$          \\
$36$      & $18$    & $8(*)$       & $69.62$    & $13.2113$       & $0.02$          & $0.43$          \\
$37$      & $19$    & $6(*)$       & $150.38$   & $11.6104$       & $0.05$          & $0.55$          \\
$38$      & $19$    & $6(*)$       & $336.25$   & $12.1435$       & $0.07$          & $0.58$          \\
$39$      & $20$    & $8(*)$       & $440.52$   & $13.1572$       & $0.05$          & $0.43$          \\
$40$      & $20$    & $8$          & $600$      & $14.7234$       & $0.02$          & $0.53$          \\
$41$      & $21$    & $10(*)$      & $243.98$   & $14.8876$       & $0.02$          & $0.35$          \\
$42$      & $21$    & $8$          & $600$      & $13.6182$       & $0.06$          & $0.46$          \\
$43$      & $22$    & $8$          & $600$      & $15.1925$       & $0.02$          & $0.55$          \\
$44$      & $22$    & $6$          & $600$      & $13.4259$       & $0.12$          & $0.67$          \\
\hline
	\end{tabular}
	\caption{Full OCP results in $\log_2$ base.}
	\label{tab:full_ocp}
\end{table*}

\section{Statements and Declarations}
\textbf{Competing Interests:} This work was supported in part by the Air Force Office of Scientific Research under award number FA9550-23-1-0508 (Hao Hu), by the National Science Foundation under grant DMS 2348720 (Hugo J. Woerdeman), and by the Natural Sciences and Engineering Research Council of Canada (Henry Wolkowicz). The authors have no relevant financial or non-financial interests to disclose.

\textbf{Data availability:} The datasets analyzed during the current
study were generated randomly, and the procedure for their generation is
fully described in the article. We also use previously published
datasets that are publicly available and are fully cited within the article.


\bibliographystyle{plain}
\bibliography{maxdet_refs}

\end{document}